\def\algspacing{\alg@unmargin}
\newtheorem{thm}{Theorem}
\newtheorem{lemma}[thm]{Lemma}
\newtheorem{prop}[thm]{Proposition}
\theoremstyle{remark}
\newtheorem{rem}[thm]{Remark}
\numberwithin{equation}{section}
\numberwithin{thm}{section}
\DeclareMathAlphabet{\mathsfsl}{OT1}{cmss}{m}{sl}
\newcommand{\term}{\emph}
\newcommand{\cnst}[1]{\mathrm{#1}}
\renewcommand{\phi}{\varphi}
\newcommand{\eps}{\varepsilon}
\newcommand{\econst}{\mathrm{e}}
\newcommand{\onevct}{\mathbf{e}}
\newcommand{\Id}{\mathbf{I}}
\newcommand{\coll}[1]{\mathscr{#1}}
\newcommand{\Rspace}[1]{\mathbb{R}^{#1}}
\newcommand{\abs}[1]{\left\vert {#1} \right\vert}
\newcommand{\Prob}[1]{\mathbb{P}\left\{ {#1} \right\}}
\newcommand{\Expect}{\operatorname{\mathbb{E}}}
\newcommand{\vct}[1]{\bm{#1}}
\newcommand{\mtx}[1]{\bm{#1}}
\newcommand{\adj}{*}
\newcommand{\diag}{\operatorname{diag}}
\newcommand{\trace}{\operatorname{tr}}
\newcommand{\psdle}{\preccurlyeq}
\newcommand{\norm}[1]{\left\Vert {#1} \right\Vert}
\newcommand{\normsq}[1]{\norm{#1}^2}
\newcommand{\smnorm}[2]{{\bigl\Vert {#2} \bigr\Vert}_{#1}}
\newcommand{\fnorm}[1]{\norm{#1}_{\mathrm{F}}}
\newcommand{\bigO}{\mathrm{O}}
\begin{document}
\title[Analysis of the SRHT]
{Improved Analysis of the \\ Subsampled Randomized Hadamard Transform}

\author{Joel~A.~Tropp}

\keywords{Dimension reduction, numerical linear algebra, random matrix, Walsh--Hadamard matrix}

\thanks{2010 {\it Mathematics Subject Classification}.
Primary: 15B52.
}

\thanks{JAT is with Computing \& Mathematical Sciences, MC 305-16, California Inst.~Technology, Pasadena, CA 91125.
E-mail: \url{jtropp@cms.caltech.edu}.
Research supported by ONR award N00014-08-1-0883, DARPA award N66001-08-1-2065, and AFOSR award FA9550-09-1-0643.}

\thanks{This note will appear in \textsl{Advances in Adaptive Data Analysis}, special issue ``Sparse Representation of Multiscale Data and Images.''}

\date{17 June 2010.  Revised 5 November 2010 and 17 July 2011.}

\begin{abstract}
This paper presents an improved analysis of a structured dimension-reduction map called the subsampled randomized Hadamard transform.  This argument demonstrates that the map preserves the Euclidean geometry of an entire subspace of vectors.  The new proof is much simpler than previous approaches, and it offers---for the first time---optimal constants in the estimate on the number of dimensions required for the embedding.
\end{abstract}

\maketitle

\section{Introduction}

\term{Dimension reduction} is an elegant idea from computer science that has found applications in numerical linear algebra.  Here is the basic concept: it is often more efficient to solve a computational problem presented in a high-dimensional space if we first transport the problem instance to a lower-dimensional space while preserving its essential structure.  Researchers have found that randomness provides an extraordinarily effective way to construct these dimension-reduction maps.  This approach is usually traced to the celebrated paper of Johnson and Lindenstrauss~\cite{JL84:Extensions-Lipschitz}.

In randomized algorithms for matrix approximation, the goal of dimension reduction is to find a low-dimensional subspace that captures most of the action of an input matrix.  One way to accomplish this task is to multiply the input matrix $\mtx{A}$ by a relatively small dimension-reduction matrix $\mtx{\Omega}$ to obtain $\mtx{Y} = \mtx{A\Omega}$.  We then perform a QR factorization $\mtx{Y} = \mtx{QR}$ to identify the range of the reduced matrix $\mtx{Y}$.  For an appropriately designed dimension-reduction map, it can be shown that $\mtx{A} \approx \mtx{QQ}^\adj \mtx{A}$ with high probability.
In words, we can approximate the input matrix by compressing it to the range of the reduced matrix.
It is then possible to compute standard matrix decompositions of $\mtx{A}$ by manipulating the low-rank approximation
$\mtx{QQ}^\adj \mtx{A}$.  See the recent survey~\cite{HMT11:Finding-Structure} for a comprehensive treatment of these ideas and an extensive bibliography.

In linear algebra applications, the cost of multiplication by an unstructured random matrix $\mtx{\Omega}$, such as a Gaussian matrix, can sometimes be prohibitive.  In that case, we may prefer to draw the random matrix $\mtx{\Omega}$ from a \emph{highly structured} distribution that allows us to multiply $\mtx{\Omega}$ into the input matrix substantially faster.
Sarl{\'o}s is credited with bringing structured dimension reduction to numerical linear algebra~\cite{Sar06:Improved-Approximation}; see also~\cite{WLRT08:Fast-Randomized}.  

The \term{subsampled randomized Hadamard transform} (SRHT) is a type of
structured dimension-reduction map that is based on the Walsh--Hadamard matrix.
We will prove that the SRHT preserves the geometry of an \emph{entire subspace} of vectors, which is the essential ingredient required to show that the SRHT can be used in algorithms for randomized linear algebra.  See the discussion in~\cite[Sec.~11]{HMT11:Finding-Structure} for details.

The literature already contains a number of papers, including~\cite{AC09:Fast-Johnson-Lindenstrauss,Lib09:Accelerated-Dense,NDT09:Fast-Efficient,HMT11:Finding-Structure,AL11:Almost-Optimal,KW11:New-Improved}, that study the behavior of the SRHT and related dimension reduction maps.  The current treatment differs in several regards.  Here, the main technical difficulties are addressed using a version of the matrix Chernoff inequality~\cite{AW02:Strong-Converse,Tro11:User-Friendly}.
As a consequence of the simple proof schema, we are able---for the first time---to obtain optimal constants in our bounds.  This improvement can be valuable in numerical applications that require concrete performance guarantees.

This document is adapted from Appendix~B of the technical report~\cite{HMT09:Finding-Structure-TR}, but much of the proof is new.  This paper should be viewed as a codicil to the published version~\cite{HMT11:Finding-Structure} of the technical report, which uses the results presented here.


\subsection{Construction of the SRHT Matrix}


An SRHT is a wide $\ell \times n$ matrix of the form 
$$
\mtx{\Phi} = \sqrt{\frac{n}{\ell}} \cdot \mtx{RHD}
$$
where

\begin{itemize}
\item   $\mtx{D}$ is a random $n \times n$ diagonal matrix whose
entries are independent random signs, i.e.,~random variables uniformly distributed on $\{\pm 1\}$;

\item   $\mtx{H}$ is an $n \times n$ Walsh--Hadamard matrix, scaled by $n^{-1/2}$ so it is an orthogonal matrix;

\item   $\mtx{R}$ is a random $\ell \times n$ matrix that
restricts an $n$-dimensional vector to $\ell$ coordinates, chosen
uniformly at random.
\end{itemize}

\noindent
Our analysis relies on two basic properties of the Walsh--Hadamard matrix: The derived matrix $\mtx{H}$ is orthogonal, and its entries all have magnitude $\pm \, n^{-1/2}$.  Walsh--Hadamard matrices exist for each $n = 2^p$ where $p = 1,2,3, \dots$.


\begin{rem}
The Walsh--Hadamard matrix is the real analog of the discrete Fourier matrix.
Whereas the $n \times n$ Fourier matrix displays the characters of the cyclic group of order $n$, the $n \times n$ Walsh--Hadamard matrix contains the characters of the additive group $\mathbb{Z}_2^p$ where $n = 2^p$.  In each case, the underlying algebraic structure allows us to multiply the matrix by a vector in about $n \log n$ arithmetic operations.  Note, however, that our results are purely analytic; they do not rely on the algebraic properties of the Walsh--Hadamard matrix.  We have chosen not to work with the Fourier matrix to avoid some small complications associated with complex random variables.
\end{rem}

\subsection{Intuition}

The design of the SRHT may seem mysterious, but there are clear intuitions
to support it~\cite{AC09:Fast-Johnson-Lindenstrauss}.  Suppose that we want to estimate the energy
(i.e., squared $\ell_2$ norm) of a fixed vector $\vct{x}$ by
sampling $\ell$ of its $n$ entries at random.  On average, these
random entries carry $\ell/n$ of the total energy.   (The factor
$\sqrt{n/\ell}$ reverses this scaling.)  When $\vct{x}$
has a few large components, the variance of this estimator
is very high.  On the other hand, when the components of $\vct{x}$
have comparable magnitude, the estimator has much lower variance,
so it is precise with very high probability.

The purpose of the matrix product $\mtx{HD}$ is to flatten out
input vectors before we sample.  To see why it achieves this goal,
fix a unit vector $\vct{x}$, and examine the first component of
$\mtx{HD}\vct{x}$.
$$
(\mtx{HD}\vct{x})_1 = \sum\nolimits_{j=1}^n h_{1j}\eps_j x_j,
$$
where $h_{ij}$ are the components of the matrix $\mtx{H}$.
This random sum clearly has zero mean.  Since the entries
of $\mtx{H}$ have magnitude $n^{-1/2}$, the variance of the sum is $n^{-1}$.  Hoeffding's inequality~\cite{Hoe63:Probability-Inequalities}
shows that
$$
\Prob{ \abs{ (\mtx{HD}\vct{x})_1 } \geq t } \leq 2\econst^{-nt^2/2}.
$$
In words, the magnitude of the first component of $\mtx{HD}\vct{x}$ is typically about $n^{-1/2}$.
The same argument applies to the remaining entries.  Therefore,
it is unlikely that any one of the $n$ components
 of $\mtx{HD}\vct{x}$ is larger than $\sqrt{n^{-1}\log (n)}$.

\begin{rem} \rm
This discussion suggests that the precise form of the SRHT is not
particularly important.  Indeed, we can replace $\mtx{H}$ by any
unitary matrix whose entries are uniformly small and which is
equipped with a fast matrix--vector multiply.  We can also
draw the diagonal entries of $\mtx{D}$ from other subgaussian distributions.
These changes complicate the analysis somewhat.
\end{rem}

\subsection{Main Results}

In early work, dimension reduction was accomplished with uniformly
random projectors or, later,
with Gaussian matrices.
These random maps preserve, up to a constant factor, all the pairwise
distances among $p$ points in $\Rspace{n}$, provided that the
embedding dimension is about $\log(p)$.
We can also use a Gaussian matrix to transport an (unknown) $k$-dimensional
subspace from a high-dimensional space to a lower-dimensional space.
In this case, it suffices to map the ambient space down to $\bigO(k)$ dimensions because
we can discretize a $k$-dimensional sphere using $p = \bigO( \econst^k )$ points.

It turns out that an appropriately designed SRHT also preserves the
geometry of an \emph{entire subspace of vectors}.  For this type of
structured map, the embedding requires $k \log(k)$ dimensions because
of certain phenomena connected with random sampling (Section~\ref{sec:coupons}).
It also becomes necessary to use more sophisticated proof techniques.
We have the following result.



\begin{thm}[The SRHT preserves geometry] \label{thm:SRHT-spec-bd}
Fix an $n \times k$ matrix $\mtx{V}$ with orthonormal columns, and
draw a random $\ell \times n$ SRHT matrix $\mtx{\Phi}$
where the embedding dimension $\ell$ satisfies
$$
4 \left[ \sqrt{k} + \sqrt{8\log(kn)} \right]^{2} \log (k) \leq \ell \leq n.
$$
Then, except with probability $\bigO(k^{-1})$,
$$
0.40 \leq \sigma_{k}(\mtx{\Phi V}) 
\quad\text{and}\quad
\sigma_{1}(\mtx{\Phi V}) \leq 1.48.
$$
The symbol $\sigma_j(\cdot)$ denotes the $j$th largest singular
value of a matrix.
\end{thm}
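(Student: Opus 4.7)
The plan is to split the analysis of $\mtx{\Phi V} = \sqrt{n/\ell}\,\mtx{R}(\mtx{HDV})$ into two stages: a ``flattening'' stage that controls the rows of $\mtx{U} \defby \mtx{HDV}$, followed by a sampling stage that analyzes $\mtx{RU}$. Because $\mtx{HD}$ is orthogonal, $\mtx{U}$ still has orthonormal columns, so $\mtx{U}^\adj \mtx{U} = \Id_k$ and the sampling stage operates on a matrix of known spectrum.

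For the flattening step, let $\vct{v}_j^\transp$ denote the $j$th row of $\mtx{V}$; the $i$th row of $\mtx{U}$ is the vector-valued Rademacher sum $\vct{u}_i \defby \sum_{j=1}^n h_{ij}\,\eps_j\,\vct{v}_j$. Its expected squared norm equals $n^{-1}\sum_j\|\vct{v}_j\|_2^2 = k/n$, and its ``weak variance'' parameter
$$
\sigma^2 = \sup_{\|w\|=1} \sum_{j=1}^n h_{ij}^2\,\ip{w}{\vct{v}_j}^2 = \frac{1}{n}\,\lambda_{\max}(\mtx{V}^\transp\mtx{V}) = \frac{1}{n}
$$
exploits the orthonormality of $\mtx{V}$. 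A sharp concentration inequality for Rademacher sums in Hilbert space (Talagrand's convex Lipschitz bound suffices, since $\vct{\eps} \mapsto \|\vct{u}_i\|_2$ is a convex $\sigma$-Lipschitz function of the sign sequence) then yields
$$
\Prob{\|\vct{u}_i\|_2 \geq \sqrt{k/n} + t} \leq \exp(-nt^2/2).
$$
Choosing $t = \sqrt{8\log(kn)/n}$ and union bounding over the $n$ rows, with probability at least $1 - O(k^{-4})$ every row satisfies $\|\vct{u}_i\|_2 \leq M/\sqrt{n}$, where $M \defby \sqrt{k}+\sqrt{8\log(kn)}$.

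Conditioned on this flatness event, the sampling step writes
$$
(\mtx{\Phi V})^\adj (\mtx{\Phi V}) = \sum_{t=1}^{\ell} \mtx{X}_t, \qquad \mtx{X}_t \defby \frac{n}{\ell}\,\vct{u}_{i_t}\vct{u}_{i_t}^\adj,
$$
where $i_1, \dots, i_\ell$ are the row indices selected by $\mtx{R}$. Each summand is PSD with $\lambda_{\max}(\mtx{X}_t) \leq M^2/\ell =: R$, and $\Expect \sum_t \mtx{X}_t = \mtx{U}^\adj \mtx{U} = \Id_k$. The hypothesis $\ell \geq 4 M^2 \log k$ gives $R \leq 1/(4\log k)$. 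Apply the matrix Chernoff inequality (in the version for sampling $\ell$ rows uniformly without replacement): with $\delta = 0.84$ for the lower tail and $\delta = 1.19$ for the upper tail, one checks numerically that both $[e^{-\delta}/(1-\delta)^{1-\delta}]^{1/R}$ and $[e^{\delta}/(1+\delta)^{1+\delta}]^{1/R}$ are at most $k^{-2}$. The dimension prefactor of $k$ in matrix Chernoff then produces failure probability $O(k^{-1})$ per tail; since $\sqrt{1-0.84}=0.40$ and $\sqrt{1+1.19}<1.48$, taking square roots converts the eigenvalue bounds on $(\mtx{\Phi V})^\adj(\mtx{\Phi V})$ into the advertised singular value bounds.

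The main obstacle is obtaining the flattening bound with the \emph{additive} form $M = \sqrt{k}+\sqrt{8\log(kn)}$ rather than a cruder multiplicative bound like $\sqrt{k\log(kn)}$; this additive form is precisely what produces the optimal $k \log k$ scaling of the embedding dimension with sharp leading constants. It requires a Rademacher concentration inequality whose variance proxy sees the orthonormality of $\mtx{V}$, not merely the weaker row-norm bound $\|\vct{v}_j\|_2 \leq 1$. A secondary technical point is that $\mtx{R}$ samples without replacement, so the $\mtx{X}_t$ are not strictly independent; this is handled by the variant of matrix Chernoff tailored to uniform sampling without replacement (which, if anything, gives slightly stronger tail bounds than the independent case).
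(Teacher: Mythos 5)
Your proposal follows essentially the same two-stage argument as the paper: a flattening lemma using a Ledoux/Talagrand concentration inequality for convex Lipschitz functions of Rademacher vectors to show the rows of $\mtx{HDV}$ are uniformly small, followed by a matrix Chernoff bound for sampling without replacement to control the spectrum of the sampled Gram matrix. Your identification of the Lipschitz constant $\sigma = n^{-1/2}$ via the orthonormality of $\mtx{V}$, the parameter choices ($\alpha = 4$, $\delta \approx 5/6$, $\eta \approx 7/6$), and the final square-root conversion to singular values all match the paper's reasoning.

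One small inaccuracy: you claim the Rademacher tail bound
$$
\Prob{\norm{\vct{u}_i} \geq \sqrt{k/n} + t} \leq \exp(-nt^2/2),
$$
but the Ledoux inequality invoked in the paper (Proposition 2.1 there) reads $\Prob{f(\vct{\eps}) \geq \Expect f(\vct{\eps}) + Lt} \leq \exp(-t^2/8)$, which with $L = n^{-1/2}$ gives $\exp(-nt^2/8)$, i.e., a factor of $4$ weaker in the exponent than what you wrote. Fortunately you chose $t = \sqrt{8\log(kn)/n}$, which is exactly calibrated to the correct $1/8$ constant; with it, the per-row failure probability is $(kn)^{-1}$ rather than $(kn)^{-4}$, and the union bound yields flattening failure probability $O(k^{-1})$ rather than your claimed $O(k^{-4})$. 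Since the matrix Chernoff step already contributes $O(k^{-1})$, the overall conclusion is unaffected, but you should not overstate the exponent in the concentration inequality.
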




The proof of Theorem~\ref{thm:SRHT-spec-bd} appears below.  This result replaces Theorem~B.4 of~\cite{HMT09:Finding-Structure-TR}.  Some precedents include~\cite[Thm.~3.1]{RV07:Sampling-Large} and \cite[Sec.~9]{Tro08:Conditioning-Random}; see also~\cite{NDT09:Fast-Efficient}.  Earlier
results have the same structure as Theorem~\ref{thm:SRHT-spec-bd},
but the constants are either exorbitant or absent.

\begin{rem}
In Theorem~\ref{thm:SRHT-spec-bd}, the factor $\log(k)$ in the lower bound on $\ell$ cannot generally be removed.  See Section~\ref{sec:coupons} for an explanation and an example that demonstrates the necessity.
\end{rem}

\begin{rem} \rm
For large problems, we have been able to obtain optimal numerical constants.  Suppose that $\iota$ is a sufficiently small positive number.  If $k \gg \log(n)$, then sampling
$$
\ell \geq (1 + \iota) \cdot k \log(k)
$$
coordinates is sufficient to ensure that $\mtx{\Phi V}$ has constant condition number.  See Theorem~\ref{thm:SRHT-large} for a more precise statement.  The discussion in Section~\ref{sec:coupons} indicates that there are cases where it does not suffice to draw $(1 - \iota) \cdot k \log(k)$ samples.
\end{rem}

%
%
%

\section{Technical Background}

In preparation for the main argument, we present our notation and some probability inequalities.  These inequalities encapsulate all the difficulty in the proof.

\subsection{Notation}

We write $\onevct_j$ for the $j$th standard basis vector in $\mathbb{R}^n$.  The matrix $\Id$ is a square identity matrix; we sometimes indicate the dimension with a subscript.  The symbol ${}^\adj$ denotes transposition.

Throughout this work, $\norm{\cdot}$ refers to the $\ell_2$ vector norm or the associated operator norm.  We also write $\fnorm{\cdot}$ for the Frobenius norm.

Given a subset $T$ of indices in $\{1, 2, \dots,n\}$, we define
the restriction operator $\mtx{R}_{T} : \Rspace{n} \to
\Rspace{T}$ via the rule
$$
(\mtx{R}_{T} \vct{x})(j) = x_{j}, \quad j \in T.
$$

A \term{Rademacher random variable} takes the values $\pm 1$ with
equal probability.  We reserve the letter $\eps$ for a
Rademacher variable, and we often write
$\vct{\eps}$ for a vector whose entries are independent
Rademacher variables.  

%


%
%
%

\subsection{Probability Inequalities}

We delegate the hard work to some probability inequalities that describe the large-deviation behavior of specific types of random variables.  First, we describe a tail bound for a convex function of Rademacher variates.  This result was established by Ledoux~\cite[Eqn.~(1.9)]{Led96:Talagrands-Deviation}; see also~\cite[\S5.2]{Led01:Concentration-Measure} for a discussion of concentration in product spaces.



\begin{prop}[Rademacher tail bound] \label{prop:rad-tail}
Suppose $f$ is a convex function on vectors that satisfies
the Lipschitz bound
$$
\abs{ f(\vct{x}) - f(\vct{y}) } \leq L \norm{ \vct{x} - \vct{y} }
\quad\text{for all $\vct{x}, \vct{y}$.}
$$
Let $\vct{\eps}$ be a Rademacher vector.  For all $t \geq 0$,
$$
\Prob{ f(\vct{\eps}) \geq \Expect f(\vct{\eps}) + L t } \leq
\econst^{-t^2/8}.
$$
\end{prop}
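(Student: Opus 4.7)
The plan is to use the entropy method on the Rademacher cube $\{\pm 1\}^n$: combine a modified logarithmic Sobolev inequality with the Herbst argument, using convexity only in a single step to convert a discrete difference into a bound on the Euclidean subgradient.

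Step 1 (entropy inequality). I would start from a modified log-Sobolev inequality for the uniform measure on $\{\pm 1\}^n$. Writing $\tau_i \vct{\eps}$ for the vector obtained from $\vct{\eps}$ by flipping its $i$th coordinate, the inequality takes the form
$$ \mathrm{Ent}\bigl( \econst^{g(\vct{\eps})} \bigr) \leq \tfrac{1}{2} \Expect \Bigl[ \econst^{g(\vct{\eps})} \sum\nolimits_i \bigl( g(\vct{\eps}) - g(\tau_i \vct{\eps}) \bigr)_{+}^{2} \Bigr], $$
valid for every $g$. This is standard: one proves the one-dimensional version by direct computation and tensorizes via the sub-additivity of entropy for product measures.

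Step 2 (convexity supplies the pointwise bound). Apply the inequality to $g = \lambda f$ for $\lambda > 0$. Because $f$ is convex and $L$-Lipschitz, at each $\vct{x} \in \{\pm 1\}^n$ there is a subgradient $\vct{s} \in \subdiff f(\vct{x})$ with $\norm{\vct{s}} \leq L$ and
$$ f(\vct{x}) - f(\tau_i \vct{x}) \leq \ip{\vct{s}}{\vct{x} - \tau_i \vct{x}} = 2 s_i x_i. $$
Squaring and summing over $i$ yields $\sum_i \bigl( f(\vct{x}) - f(\tau_i \vct{x}) \bigr)_{+}^{2} \leq 4 \norm{\vct{s}}^{2} \leq 4 L^{2}$ pointwise. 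This is the only place convexity is used, and it is essential: for a general Lipschitz $f$ one could control each discrete difference individually but not their squared sum.

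Step 3 (Herbst). Set $M(\lambda) = \Expect \econst^{\lambda f(\vct{\eps})}$. The two previous steps give $\mathrm{Ent}(\econst^{\lambda f}) \leq 2 L^{2} \lambda^{2} M(\lambda)$, i.e.
$$ \lambda M'(\lambda) - M(\lambda) \log M(\lambda) \leq 2 L^{2} \lambda^{2} M(\lambda). $$
Dividing by $\lambda^{2} M(\lambda)$ shows that the function $\lambda \mapsto \lambda^{-1} \log M(\lambda)$ has derivative at most $2 L^{2}$, and integrating from $0$ (where the limit is $\Expect f$) produces the subgaussian MGF bound $\log \Expect \econst^{\lambda (f - \Expect f)} \leq 2 L^{2} \lambda^{2}$. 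A Chernoff bound and optimization at $\lambda = t/(4L)$ deliver the claimed tail $\econst^{-t^{2}/8}$.

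The main obstacle is Step 1: the modified log-Sobolev inequality with precisely this constant (and with the positive-part gradient on the right-hand side) is a genuine analytic input, and getting any weaker form would degrade the final exponent. If I wished to avoid proving it from scratch I would fall back on the alternative route through Talagrand's convex distance inequality: for the sub-level set $A = \{ f \leq \mu \}$ with $\mu$ a median of $f(\vct{\eps})$, convexity gives $f(\vct{x}) - \mu \leq L \cdot d_{T}(\vct{x}, A)$, Talagrand's inequality bounds $\Prob{d_{T}(\vct{\eps}, A) \geq t}$ by $\econst^{-t^{2}/4}$, and finally one converts concentration around the median into concentration around the mean by integrating the tail. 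Either route isolates convexity to a single geometric step and reduces the proposition to a standard concentration fact.
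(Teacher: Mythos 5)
The paper does not prove Proposition~\ref{prop:rad-tail}; it cites it directly from Ledoux's 1996 paper \cite{Led96:Talagrands-Deviation}. Your main line of argument---a modified logarithmic Sobolev inequality on the cube, the subgradient bound $\sum_i (f(\vct{x}) - f(\tau_i\vct{x}))_+^2 \le 4\norm{\vct{s}}^2 \le 4L^2$ supplied by convexity, and the Herbst argument---is essentially Ledoux's own proof, and it is correct. I verified the one-dimensional entropy inequality with your constant $\tfrac12$ on the positive-part gradient (writing $g(1)=a \ge g(-1)=b$, $d = a-b$, one reduces to $d\econst^d - (\econst^d+1)\log\tfrac{\econst^d+1}{2} \le \tfrac12 d^2 \econst^d$, which holds since the difference vanishes at $d=0$ and has nonpositive derivative), tensorization is the usual subadditivity of entropy for product measures, and the Herbst step gives $\log \Expect \econst^{\lambda(f - \Expect f)} \le 2L^2\lambda^2$ and hence the tail $\econst^{-t^2/8}$ after optimizing the Chernoff bound. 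The bookkeeping is right.

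One caveat on your fallback route: Talagrand's convex distance inequality, combined with the bound $f(\vct{x}) - \mu \le L\, d_T(\vct{x}, A)$ for the sub-median set $A$, yields concentration \emph{around the median}, namely $\Prob{f \ge \mu + Lt} \le 2\econst^{-t^2/4}$. Converting this to concentration around the mean costs a shift of order $L$ (since $\abs{\Expect f - \mu} = \bigO(L)$) and a prefactor of $2$, so it does not reproduce the clean statement $\Prob{f \ge \Expect f + Lt} \le \econst^{-t^2/8}$ with these exact constants. The two routes are not interchangeable if the specific constant $\tfrac18$ matters, which it does here since the paper advertises optimal constants downstream. Your entropy-method route is the one that delivers the stated bound; the Talagrand route should be presented as giving a qualitatively similar but numerically weaker estimate.
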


Next, we present a matrix analog of the well-known Chernoff inequality.  The proof is based on the matrix Laplace transform method proposed in an influential paper of Ahlswede--Winter~\cite{AW02:Strong-Converse}.  We derive the result using more recent ideas~\cite{Tro11:User-Friendly}, which deliver an essential improvement on the earlier work.  The other key tool is a method~\cite{GN10:Note-Sampling}, ultimately due to Hoeffding~\cite{Hoe63:Probability-Inequalities}, for transferring results from the model where we sample \emph{with} replacement to the model where we sample \emph{without} replacement.

\begin{thm}[Matrix Chernoff] \label{thm:matrix-chernoff}
Let $\coll{X}$ be a finite set of positive-semidefinite matrices with dimension $k$, and suppose that 
$$
\max\nolimits_{\mtx{X} \in \coll{X}} \lambda_{\max}(\mtx{X}) \leq B.
$$
Sample $\{ \mtx{X}_1, \dots, \mtx{X}_\ell\}$ uniformly at random from $\coll{X}$ \emph{without} replacement.  Compute
$$
\mu_{\min} := \ell \cdot \lambda_{\min}\left( \Expect \mtx{X}_1 \right)
\quad\text{and}\quad
\mu_{\max} := \ell \cdot \lambda_{\max}\left( \Expect \mtx{X}_1 \right).
$$
Then
\begin{align*}
\Prob{ \lambda_{\min}\left(\sum\nolimits_j \mtx{X}_j \right)
	\leq (1 - \delta) \mu_{\min} }
	&\leq k \cdot \left[ \frac{\econst^{-\delta}}{(1-\delta)^{1-\delta}} \right]^{\mu_{\min}/B}
\quad\text{for $\delta \in [0, 1)$, and} \\
\Prob{ \lambda_{\max}\left(\sum\nolimits_j \mtx{X}_j \right)
	\geq (1 + \delta) \mu_{\max} }
	&\leq k \cdot \left[ \frac{\econst^{\delta}}{(1+\delta)^{1+\delta}} \right]^{\mu_{\max}/B}
\quad\text{for $\delta \geq 0$.}
\end{align*}
\end{thm}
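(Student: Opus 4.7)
The plan is to handle the two main difficulties separately: first, establish the matrix Chernoff bound under the simpler model of sampling \emph{with} replacement (where the summands are i.i.d.), and second, transfer the result to the model of sampling \emph{without} replacement via a convexity argument.

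For the first stage, I would apply the matrix Laplace transform method of Ahlswede--Winter, as refined in~\cite{Tro11:User-Friendly}. Let $\mtx{Y} = \sum_{j=1}^\ell \mtx{Z}_j$ where $\mtx{Z}_1, \dots, \mtx{Z}_\ell$ are i.i.d.\ uniform samples from $\coll{X}$. Markov's inequality applied to the convex function $\mtx{X} \mapsto \trace \exp(\theta \mtx{X})$ gives, for each $\theta > 0$,
$$
\Prob{\lambda_{\max}(\mtx{Y}) \geq t} \leq \econst^{-\theta t} \cdot \Expect \trace \exp\bigl( \theta \mtx{Y} \bigr).
$$
The key tool to decouple the sum inside the exponential is Lieb's concavity theorem, which (via the master inequalities of~\cite{Tro11:User-Friendly}) yields
$$
\Expect \trace \exp(\theta \mtx{Y}) \leq \trace \exp\Bigl( \ell \cdot \log \Expect \econst^{\theta \mtx{Z}_1} \Bigr).
$$
Since $\zeromtx \psdle \mtx{Z}_1 \psdle B \Id$, convexity of $x \mapsto \econst^{\theta x}$ on $[0,B]$ gives the semidefinite bound $\econst^{\theta \mtx{Z}_1} \psdle \Id + B^{-1}(\econst^{\theta B}-1) \mtx{Z}_1$, and then the operator monotonicity of the logarithm together with $\log(1+x) \leq x$ produces
$$
\log \Expect \econst^{\theta \mtx{Z}_1} \psdle B^{-1}(\econst^{\theta B}-1) \cdot \Expect \mtx{Z}_1.
$$
Substituting and using monotonicity of $\trace \exp$ in the PSD order, I would bound $\lambda_{\max}$ of the exponent by $(\mu_{\max}/B)(\econst^{\theta B}-1)$, then optimize $\theta = B^{-1}\log(1+\delta)$ to obtain the Chernoff form for the upper tail. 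The lower tail is analogous, applied to $-\mtx{Z}_j$ with the appropriate monotone inequalities.

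For the second stage, I would invoke the Hoeffding reduction in the matrix form established by Gross and Nesme~\cite{GN10:Note-Sampling}: if $\mtx{X}_1, \dots, \mtx{X}_\ell$ are drawn without replacement from $\coll{X}$, and $\mtx{Z}_1, \dots, \mtx{Z}_\ell$ are drawn with replacement, then for every continuous convex function $g$ on Hermitian matrices,
$$
\Expect g\Bigl( \sum\nolimits_j \mtx{X}_j \Bigr) \leq \Expect g\Bigl( \sum\nolimits_j \mtx{Z}_j \Bigr).
$$
The proof is a short convexity argument: the without-replacement sample can be realized as a conditional average over orderings of a with-replacement sample. Since $\mtx{X} \mapsto \trace \exp(\theta \mtx{X})$ is convex, this reduction transports the Laplace-transform bound from stage one directly to the without-replacement model, without altering any constants. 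Combining the two stages yields the stated inequalities.

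The main technical obstacle is stage one, specifically the invocation of Lieb's theorem to linearize the trace-exponential of a sum; everything else is either a routine convex optimization (choosing $\theta$) or a clean invocation of~\cite{GN10:Note-Sampling}. I expect no complications from the parameter $\mu_{\min}$ versus $\mu_{\max}$ beyond keeping track of signs in the lower-tail computation, where one works with $-\mtx{Z}_j$ and notes that $\lambda_{\max}(-\mtx{Z}_j) \leq 0 \leq B$.
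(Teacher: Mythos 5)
Your proof is correct and follows essentially the same route as the paper: matrix Laplace transform (Ahlswede--Winter as refined in \cite{Tro11:User-Friendly}), the Gross--Nesme transfer of the trace mgf from without-replacement to with-replacement sampling, and a Lieb-based subadditivity bound combined with a semidefinite mgf estimate, followed by the standard Chernoff optimization $\theta = B^{-1}\log(1+\delta)$. The only difference is one of exposition: you unpack Lemmas 3.4 and 5.8 of \cite{Tro11:User-Friendly} into their underlying Lieb-concavity and scalar-convexity ingredients, whereas the paper invokes them as black boxes.
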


\begin{proof}[Proof sketch]
We establish only the upper bound; the lower bound is established by applying a similar method to $-\sum\nolimits_j \mtx{X}_j$.  By homogeneity, take $B = 1$.  We use the matrix Laplace transform method~\cite[Prop.~3.1]{Tro11:User-Friendly} to bound the probability of a large deviation:
\begin{equation} \label{eqn:chernoff-prob-bound}
\Prob{ \lambda_{\max}\left(\sum\nolimits_j \mtx{X}_j \right) \geq (1+t)\mu_{\max} }
	\leq \inf_{\theta > 0}\left\{ \econst^{-\theta (1+t) \mu_{\max}} \cdot \Expect \trace \exp\left( \sum\nolimits_j \theta \mtx{X}_j \right) \right\}.
\end{equation}
The Laplace transform bound~\eqref{eqn:chernoff-prob-bound} is essentially due to Ahlswede and Winter~\cite{AW02:Strong-Converse}.

Let $\{\mtx{Y}_1, \dots, \mtx{Y}_\ell\}$ be a random sample from $\coll{X}$ drawn \emph{with} replacement.  Gross and Nesme~\cite{GN10:Note-Sampling} have shown that the trace of the matrix moment generating function (mgf) of a sample \emph{without} replacement is dominated by the trace of the matrix mgf of a sample \emph{with} replacement:
\begin{equation} \label{eqn:without-with}
\Expect \trace \exp\left( \sum\nolimits_j \theta \mtx{X}_j \right)
	\leq \Expect \trace \exp\left( \sum\nolimits_j \theta \mtx{Y}_j \right).
\end{equation}

Lemma~5.8 of~\cite{Tro11:User-Friendly} gives a semidefinite bound for the mgf of $\mtx{Y}_j$.  (See also~\cite[Thm.~19]{AW02:Strong-Converse}.)
\begin{equation} \label{eqn:chernoff-mgf}
\Expect \econst^{\theta \mtx{Y}_j}
	\psdle \econst^{(\econst^{\theta} - 1)( \Expect \mtx{Y}_j )}
	= \econst^{(\econst^{\theta} - 1) (\Expect \mtx{X}_1)}
\quad\text{for $\theta \in \mathbb{R}$.}
\end{equation}
On the right-hand side of this bound, we have exploited the fact that $\mtx{Y}_j \sim \mtx{Y}_1 \sim \mtx{X}_1$.  Lemma~3.4 of~\cite{Tro11:User-Friendly} allows us to use the mgf bound~\eqref{eqn:chernoff-mgf} to bound the mgf of the entire sum:
\begin{equation} \label{eqn:chernoff-final}
\Expect \trace \exp\left( \sum\nolimits_j \theta \mtx{Y}_j \right)
	\leq \trace\exp\left( (\econst^{\theta} - 1) \cdot \ell \cdot ( \Expect \mtx{X}_1) \right)
	\leq k \cdot \exp\left( (\econst^{\theta} - 1) \cdot \mu_{\max} \right).
\end{equation}
Substitute~\eqref{eqn:chernoff-final} into \eqref{eqn:without-with} and introduce the resulting inequality into the probability bound~\eqref{eqn:chernoff-prob-bound}.  The infimum is achieved at $\theta = \log(1 + \delta)$.
\end{proof}

\section{The SRHT Preserves Geometry}

In this section, we establish a slightly more specific version of our main result, Theorem~\ref{thm:SRHT-spec-bd}.  

\begin{thm}[The SRHT preserves geometry] \label{thm:SRHT}
Let $\mtx{V}$ be an $n \times k$ matrix with orthonormal columns.
Select a parameter $\ell$ that satisfies
$$
4 \left[\sqrt{k} + \sqrt{8\log(kn)} \right]^2 \log(k) \leq \ell \leq n.
$$
Draw an $\ell \times n$ SRHT matrix
$\mtx{\Phi}$.  Then, except with probability $3k^{-1}$,
$$
\frac{1}{\sqrt{6}} \leq \sigma_{k}( \mtx{\Phi V} ) 
\quad\text{and}\quad
\sigma_{1}(\mtx{\Phi V} ) \leq \sqrt{\frac{13}{6}}.
$$
\end{thm}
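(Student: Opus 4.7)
The strategy is to separate the two independent sources of randomness in $\mtx{\Phi} = \sqrt{n/\ell}\,\mtx{RHD}$. Set $\mtx{U} := \mtx{HDV}$; since $\mtx{H}$ is orthogonal and $\mtx{D}$ is a signed permutation, $\mtx{U}$ has orthonormal columns, and
$$
(\mtx{\Phi V})^\adj(\mtx{\Phi V}) = \frac{n}{\ell}\sum_{j=1}^\ell \mtx{X}_j
\qquad \text{with } \mtx{X}_j := \mtx{U}^\adj \onevct_{J_j}\onevct_{J_j}^\adj \mtx{U},
$$
where $J_1,\dots,J_\ell$ are sampled uniformly from $\{1,\dots,n\}$ without replacement. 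Each $\mtx{X}_j$ is rank-one PSD with $\lambda_{\max}(\mtx{X}_j) = \|\onevct_{J_j}^\adj \mtx{U}\|^2$, and $\Expect\mtx{X}_1 = n^{-1}\Id_k$, so that $\mu_{\min} = \mu_{\max} = \ell/n$. This reduces the proof to two self-contained tasks: (i) show that the rows of $\mtx{U}$ are \emph{flat}, i.e., $\max_j \|\onevct_j^\adj \mtx{U}\|$ is uniformly small; and (ii) apply matrix Chernoff to the sum of the $\mtx{X}_j$.

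For task (i), I would view the row norm as a function of the Rademacher vector $\vct{\eps}$ whose entries populate $\mtx{D}$. Writing $\vct{h}_j$ for the $j$th row of $\mtx{H}$ viewed as a column, one has $(\onevct_j^\adj \mtx{U})^\adj = \mtx{V}^\adj\diag(\vct{h}_j)\vct{\eps}$, so $f_j(\vct{\eps}) := \|\onevct_j^\adj \mtx{U}\|$ is convex with Lipschitz constant at most $\|\mtx{V}^\adj\diag(\vct{h}_j)\| \leq n^{-1/2}$ (using $\|\mtx{V}\|=1$ and $|h_{ji}|=n^{-1/2}$). A direct variance computation gives $\Expect f_j(\vct{\eps})^2 = n^{-1}\trace(\mtx{V}^\adj\mtx{V}) = k/n$, so $\Expect f_j \leq \sqrt{k/n}$ by Jensen. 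Proposition~\ref{prop:rad-tail} with $t = \sqrt{8\log(kn)}$ then gives a per-row tail bound of $(kn)^{-1}$; a union bound over $j=1,\dots,n$ shows that
$$
M := \max_{1\le j\le n}\|\onevct_j^\adj \mtx{U}\|^2 \leq n^{-1}\bigl(\sqrt{k} + \sqrt{8\log(kn)}\bigr)^2 =: M_0
$$
except on a flatness-failure event of probability at most $k^{-1}$.

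For task (ii), I would condition on $\mtx{D}$ so that the flatness event holds, and then apply Theorem~\ref{thm:matrix-chernoff} to the $\mtx{X}_j$ with $B = M_0$. The hypothesis on $\ell$ is precisely equivalent to $\mu_{\min}/B = \mu_{\max}/B \geq 4\log k$. Taking $\delta_- = 5/6$ in the lower tail and $\delta_+ = 7/6$ in the upper tail, a numerical check of the Chernoff exponents shows that each of $k \cdot [\econst^{-5/6}/(1/6)^{1/6}]^{4\log k}$ and $k\cdot[\econst^{7/6}/(13/6)^{13/6}]^{4\log k}$ is bounded by $k^{-1}$. Translating back, $\lambda_{\min}((\mtx{\Phi V})^\adj(\mtx{\Phi V})) \geq 1/6$ and $\lambda_{\max}(\cdot)\leq 13/6$, which on taking square roots delivers the claimed singular-value bounds. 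Summing the three failure probabilities yields the advertised $3k^{-1}$.

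The main obstacle is getting Step (i) tight: the Lipschitz constant must come out to exactly $n^{-1/2}$ and the mean to $\sqrt{k/n}$, since those constants feed directly into $M_0$ and hence into the required lower bound on $\ell$. Once that step delivers the clean quantity $M_0$, Step (ii) is essentially bookkeeping, using the fact that Theorem~\ref{thm:matrix-chernoff} already handles the without-replacement sampling model via the Gross--Nesme reduction.
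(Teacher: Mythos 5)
Your proposal is correct and follows essentially the same route as the paper: define $\mtx{U} = \mtx{HDV}$, use the Ledoux convexity inequality plus a union bound to flatten the rows (with exactly the Lipschitz constant $n^{-1/2}$ and mean $\sqrt{k/n}$), then apply the matrix Chernoff bound with $\delta = 5/6$ on the lower tail and $7/6$ on the upper tail. The only difference is organizational: the paper factors the second step out into a standalone row-sampling lemma (Lemma~\ref{lem:row-samples}), while you inline the Chernoff application directly, but the parameters, numerical checks, and failure-probability accounting are the same.
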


The constants in the sample size $\ell$ specified in Theorem~\ref{thm:SRHT} are somewhat larger than we might like because we wanted to ensure that the statement is effective for reasonable values of $k$ and $n$.  We also have
the following result of a more asymptotic flavor.

\begin{thm}[SRHT: Large sample bounds] \label{thm:SRHT-large}
Fix a positive number $\iota \leq \cnst{c}$.  Let $\mtx{V}$ be an $n \times k$ matrix with orthonormal columns, where $k \geq \cnst{C} \iota^{-2} \log(n)$.  Select a parameter $\ell$ that satisfies
$$
(1 + \iota) \cdot k \log(k) \leq \ell \leq n.
$$
Draw an $\ell \times n$ SRHT $\mtx{\Phi}$.  Then
$$
\iota \leq \sigma_k(\mtx{\Phi} \mtx{V})
\quad\text{and}\quad
\sigma_1(\mtx{\Phi V}) \leq \sqrt{\econst}
$$
except with probability $\bigO(k^{-\cnst{c}\iota})$.
The numbers $\cnst{c}$ and $\cnst{C}$ are positive universal constant.
\end{thm}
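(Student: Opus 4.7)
The argument is the two-stage version of the proof of Theorem~\ref{thm:SRHT}, with the parameters in each stage tuned to exploit the regime $k \gg \log n$. Write $\mtx{U} = \mtx{HDV}$; since $\mtx{U}$ has orthonormal columns and $\mtx{\Phi V} = \sqrt{n/\ell}\,\mtx{RU}$, the squared singular values of $\mtx{\Phi V}$ coincide with the eigenvalues of
$$
\frac{1}{\ell}\sum_{j \in T} \mtx{X}_j, \quad \text{where } \mtx{X}_j \defby n\,(\mtx{U}^\adj \onevct_j)(\mtx{U}^\adj \onevct_j)^\adj,
$$
and $T$ is the uniform random sample of $\ell$ indices from $\{1,\dots,n\}$ implemented by $\mtx{R}$. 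The identity $n^{-1}\sum_j \mtx{X}_j = \mtx{U}^\adj\mtx{U} = \mtx{I}_k$ gives $\Expect\mtx{X}_1 = \mtx{I}_k$, hence $\mu_{\min} = \mu_{\max} = \ell$.

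Stage~1 controls $B \defby \max_j \lambda_{\max}(\mtx{X}_j) = \max_j n\|\mtx{U}^\adj\onevct_j\|^2$. For each $j$, the vector $\mtx{U}^\adj\onevct_j = \mtx{V}^\adj\diag(\mtx{H}\onevct_j)\vct{\eps}$ is a Rademacher-linear combination whose norm is convex with Lipschitz constant $n^{-1/2}$ and mean at most $\sqrt{k/n}$. Proposition~\ref{prop:rad-tail} applied with $t = \sqrt{8\log(kn)}$, together with a union bound over the $n$ indices, gives $B \leq (\sqrt{k}+\sqrt{8\log(kn)})^2$ except with probability $1/k$. Stage~2 then applies Theorem~\ref{thm:matrix-chernoff} to $\sum_{j\in T}\mtx{X}_j$ conditional on this bound: the choice $\delta = \econst - 1$ in the upper tail yields $\Prob{\sigma_1(\mtx{\Phi V}) \geq \sqrt{\econst}} \leq k\,\econst^{-\ell/B}$, and the choice $\delta = 1 - \iota^2$ in the lower tail yields $\Prob{\sigma_k(\mtx{\Phi V}) \leq \iota} \leq k\bigl[\econst^{-(1-\iota^2)}/(\iota^2)^{\iota^2}\bigr]^{\ell/B}$.

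It remains to show that $\ell/B$ exceeds $\log k$ by a margin of order $\iota$. The hypothesis $k \geq \cnst{C}\iota^{-2}\log n$ with $\cnst{C}$ a large universal constant, combined with $k \leq n$, yields $\sqrt{8\log(kn)} \leq (\iota/8)\sqrt{k}$ and hence $B \leq k(1+\iota/4)$; thus $\ell/B \geq (1+\cnst{c}\iota)\log k$. In the lower tail, the bracket $\econst^{-(1-\iota^2)}/(\iota^2)^{\iota^2}$ equals $\exp(-1 + \iota^2(1-2\log\iota))$, which for $\iota$ below an absolute threshold is at most $\exp(-1 + \cnst{c}\iota/4)$; raising to the $\ell/B$-th power and multiplying by $k$ produces a tail bound of $\bigO(k^{-\cnst{c}\iota})$, and the upper tail is handled in the same way but more easily. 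The Stage-1 failure event is absorbed by a final union bound. The main obstacle is precisely this calibration: the $(\iota^2)^{\iota^2}$ factor in the Chernoff lower tail introduces a penalty of order $\iota^2\log(1/\iota)$ that must be strictly dominated by the $(1+\cnst{c}\iota)$ improvement in $\ell/B$, which is why the logarithmic-in-$n$ lower bound on $k$ is required.
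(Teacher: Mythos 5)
Your proposal is correct and takes the same route as the paper, which proves this theorem by applying Lemma~\ref{lem:HDV-rows} with $\beta = k$ and Lemma~\ref{lem:row-samples} with $\alpha = 1 + \iota/2$, $\delta = 1 - \iota^2$, $\eta = \econst - 1$, omitting the calibration. Your Stage~1 and Stage~2 unpack exactly those two lemmas and parameter choices, and your closing remark about the $(\iota^2)^{\iota^2}$ penalty correctly identifies why the hypothesis $k \gtrsim \iota^{-2}\log n$ is required.
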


\subsection{Overview of Theorem~\ref{thm:SRHT}}

We present the main line of reasoning here, postponing the proofs
of the lemmata.  The first step is to show that the
matrix $\mtx{HD}$ equilibrates row norms.

\begin{lemma}[Row norms] \label{lem:HDV-rows}
Let $\mtx{V}$ be an $n \times k$ matrix with orthonormal columns.  Then
$\mtx{HDV}$ is an $n \times k$ matrix with orthonormal columns, and
$$
\Prob{ \max_{j=1,\dots,n} \norm{ \onevct_j^\adj (\mtx{HDV}) }
    \geq \sqrt{\frac{k}{n}} + \sqrt{\frac{8\log(\beta n)}{n}} }
    \leq \frac{1}{\beta}.
$$
\end{lemma}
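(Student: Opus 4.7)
The plan is to apply the Rademacher tail bound in Proposition~\ref{prop:rad-tail} to each row norm and then union bound over the $n$ rows. The first claim, that $\mtx{HDV}$ has orthonormal columns, is immediate: both $\mtx{H}$ and $\mtx{D}$ are orthogonal (the diagonal $\pm 1$ matrix $\mtx{D}$ satisfies $\mtx{D}^\adj\mtx{D} = \Id$), so $(\mtx{HDV})^\adj(\mtx{HDV}) = \mtx{V}^\adj\mtx{D}\mtx{H}^\adj\mtx{HDV} = \mtx{V}^\adj\mtx{V} = \Id_k$.

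For the tail bound, write $\mtx{D} = \diag(\vct{\eps})$ for a Rademacher vector $\vct{\eps} \in \Rspace{n}$, and fix an index $j$. Expanding entrywise, the $j$th row of $\mtx{HDV}$ is $\onevct_j^\adj \mtx{HDV} = \sum_{i=1}^n h_{ji}\eps_i (\mtx{V}^\adj \onevct_i)^\adj$, so
$$
f_j(\vct{\eps}) \defby \norm{\onevct_j^\adj \mtx{HDV}} = \norm{\mtx{A}_j \vct{\eps}},
\quad\text{where $\mtx{A}_j \defby \mtx{V}^\adj \diag(\mtx{H}^\adj \onevct_j)$.}
$$
This is manifestly a convex function of $\vct{\eps}$ with Lipschitz constant $\norm{\mtx{A}_j}$. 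To bound $\norm{\mtx{A}_j}$, I compute $\mtx{A}_j \mtx{A}_j^\adj = \sum_i h_{ji}^2 (\mtx{V}^\adj \onevct_i)(\mtx{V}^\adj \onevct_i)^\adj$; since the entries of $\mtx{H}$ satisfy $h_{ji}^2 = 1/n$ and the columns of $\mtx{V}$ are orthonormal, this collapses to $\frac{1}{n} \mtx{V}^\adj\mtx{V} = \frac{1}{n}\Id_k$. Hence $\norm{\mtx{A}_j} = 1/\sqrt{n}$.

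For the expectation, Jensen's inequality and the independence of the Rademacher entries give
$$
\Expect f_j(\vct{\eps}) \leq \sqrt{\Expect \norm{\mtx{A}_j \vct{\eps}}^2} = \sqrt{\trace(\mtx{A}_j \mtx{A}_j^\adj)} = \sqrt{k/n}.
$$
Applying Proposition~\ref{prop:rad-tail} with Lipschitz constant $L = 1/\sqrt{n}$ and the choice $t = \sqrt{8\log(\beta n)}$ yields
$$
\Prob{ f_j(\vct{\eps}) \geq \sqrt{k/n} + \sqrt{8\log(\beta n)/n} } \leq \econst^{-t^2/8} = \frac{1}{\beta n}.
$$
A union bound over $j = 1, \dots, n$ multiplies the failure probability by $n$, delivering the claimed bound $1/\beta$.

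No step is really an obstacle: the only place that requires any thought is recognizing that the flatness of $\mtx{H}$ (each $|h_{ji}| = 1/\sqrt{n}$) combined with the isometry of $\mtx{V}$ forces $\mtx{A}_j \mtx{A}_j^\adj$ to be a perfect multiple of the identity, which simultaneously pins down the Lipschitz constant and the second moment. Everything else is a mechanical application of the Rademacher concentration inequality already recorded in Proposition~\ref{prop:rad-tail}.
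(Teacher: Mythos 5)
Your proposal is correct and follows essentially the same route as the paper: write each row norm as a convex Lipschitz function of the Rademacher vector $\vct{\eps}$, bound the Lipschitz constant and the expectation, invoke Proposition~\ref{prop:rad-tail}, and union bound over rows. The only (cosmetic) difference is that you compute $\mtx{A}_j\mtx{A}_j^\adj = \tfrac{1}{n}\Id_k$ exactly, which pins down both the Lipschitz constant and the second moment with equality, whereas the paper obtains the same numbers via the submultiplicative bounds $\norm{\mtx{E}}\norm{\mtx{V}}$ and $\norm{\mtx{E}}\fnorm{\mtx{V}}$.
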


When $k \gg \log n$, the second term in the bound is negligible,
in which case the row norms are essentially as small as possible.
On the other hand, when $k < \log n$, small sample effects can
make some row norms large.

The next result states that randomly sampling rows from a matrix with orthonormal columns results in a well-conditioned matrix.  The minimum size of the sample depends primarily on the row norms, and the sampling procedure is
most efficient when the matrix has uniformly small rows.  We state this result in detail because it may have independent interest.

\begin{lemma}[Row sampling] \label{lem:row-samples}
Let $\mtx{W}$ be an $n \times k$ matrix with orthonormal columns, and define the quantity
$M := n \cdot \max_{j=1,\dots,n} \smnorm{}{\onevct_j^\adj \mtx{W}}^2$.
For a positive parameter $\alpha$, select the sample size
$$
\ell \geq \alpha M \log(k).
$$
Draw a random subset $T$ from $\{1,2,\dots, n\}$ by sampling $\ell$ coordinates without replacement.  Then
$$
\sqrt{\frac{(1-\delta) \ell}{n}} \leq \sigma_{k}( \mtx{R}_{T} \mtx{W} )
\quad\text{and}\quad
\sigma_{1}( \mtx{R}_{T} \mtx{W} ) \leq \sqrt{\frac{(1 + \eta) \ell}{n}},
$$
with failure probability at most
$$
k \cdot \left[ \frac{\econst^{-\delta}}{(1-\delta)^{1-\delta}} \right]^{\alpha \log k}
+ k \cdot \left[ \frac{\econst^{\eta}}{(1+\eta)^{(1+\eta)}} \right]^{\alpha \log k}.
$$
\end{lemma}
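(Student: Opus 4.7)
The plan is to express the Gram matrix $(\mtx{R}_T \mtx{W})^\adj (\mtx{R}_T \mtx{W})$ as a sum of random positive-semidefinite matrices and then invoke the matrix Chernoff inequality (Theorem~\ref{thm:matrix-chernoff}) directly. Concretely, I would define, for each index $j \in \{1, \dots, n\}$, the rank-one PSD matrix
$$
\mtx{X}_j \defby (\onevct_j^\adj \mtx{W})^\adj (\onevct_j^\adj \mtx{W}),
$$
which is the outer product of the $j$th row of $\mtx{W}$ with itself. By construction, $\sum_{j \in T} \mtx{X}_j = (\mtx{R}_T \mtx{W})^\adj (\mtx{R}_T \mtx{W})$, so its eigenvalues coincide with the squared singular values of $\mtx{R}_T \mtx{W}$. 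Sampling $T$ uniformly without replacement is exactly the sampling model covered by Theorem~\ref{thm:matrix-chernoff}.

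Next, I would verify the two ingredients the matrix Chernoff bound requires. First, the uniform eigenvalue bound: since $\mtx{X}_j$ has rank one, $\lambda_{\max}(\mtx{X}_j) = \smnorm{}{\onevct_j^\adj \mtx{W}}^2 \leq M/n$ by the definition of $M$, so we may take $B = M/n$. Second, the mean computation: sampling a single index uniformly yields
$$
\Expect \mtx{X}_1 = \frac{1}{n} \sum_{j=1}^n (\onevct_j^\adj \mtx{W})^\adj (\onevct_j^\adj \mtx{W}) = \frac{1}{n} \mtx{W}^\adj \mtx{W} = \frac{1}{n} \Id_k,
$$
using orthonormality of the columns of $\mtx{W}$. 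Consequently $\mu_{\min} = \mu_{\max} = \ell/n$, and the ratio $\mu_{\max}/B = \mu_{\min}/B = \ell/M$.

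Now I would plug these quantities into Theorem~\ref{thm:matrix-chernoff}. The lower tail gives
$$
\Prob{ \lambda_{\min}\!\left( (\mtx{R}_T \mtx{W})^\adj (\mtx{R}_T \mtx{W}) \right) \leq (1-\delta) \frac{\ell}{n} }
\leq k \cdot \left[ \frac{\econst^{-\delta}}{(1-\delta)^{1-\delta}} \right]^{\ell/M},
$$
and the upper tail gives the analogous bound with $\eta$ in place of $\delta$. The hypothesis $\ell \geq \alpha M \log(k)$ replaces the exponent $\ell/M$ by $\alpha \log(k)$, producing precisely the stated failure probability. Finally, the singular value identity $\sigma_j(\mtx{R}_T \mtx{W})^2 = \lambda_j((\mtx{R}_T \mtx{W})^\adj (\mtx{R}_T \mtx{W}))$ and a union bound on the two tail events yield the claimed two-sided control on $\sigma_1$ and $\sigma_k$.

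I do not anticipate a genuinely hard step here: once the sum-of-rank-one-matrices decomposition is set up, the argument is a direct substitution into Theorem~\ref{thm:matrix-chernoff}. The only mild subtlety is bookkeeping—making sure the uniform row-norm bound $M$ appears as the Chernoff parameter $B$, and that the mean $\Expect \mtx{X}_1 = n^{-1} \Id_k$ is identified correctly so that $\mu_{\min}$ and $\mu_{\max}$ both equal $\ell/n$. That isotropy is what makes this lemma so clean and what allows the entire difficulty to be pushed into the abstract matrix Chernoff inequality.
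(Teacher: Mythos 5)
Your proposal is correct and follows essentially the same route as the paper's own proof: decompose the Gram matrix $(\mtx{R}_T\mtx{W})^\adj(\mtx{R}_T\mtx{W})$ as a sum of rank-one outer products of rows of $\mtx{W}$ sampled without replacement, verify $B = M/n$ and $\Expect\mtx{X}_1 = n^{-1}\Id_k$, and apply Theorem~\ref{thm:matrix-chernoff} to both tails. The only cosmetic difference is that you state the union bound and the exponent substitution $\ell/M \geq \alpha\log k$ explicitly, which the paper leaves implicit.
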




These two lemmata allow us to establish Theorem~\ref{thm:SRHT} quickly.
Recall that $\mtx{V}$ be an $n \times k$ matrix with orthonormal columns, and
draw an $\ell \times n$ SRHT $\mtx{\Phi} = \mtx{R}_T \mtx{HD}$.

Define the matrix $\mtx{W} = \mtx{HDV}$.  Lemma~\ref{lem:HDV-rows} with $\beta = k$
establishes that $\mtx{W}$ is a matrix with orthonormal columns whose largest row norm
is essentially as small as possible, so that
$$
M := n \cdot \max_{j=1,\dots,n} \smnorm{}{\onevct_j^\adj \mtx{W}}^2
    \leq \left[ \sqrt{k} + \sqrt{8\log(kn)} \right]^2,
$$
except with probability $k^{-1}$.  Next, we apply Lemma~\ref{lem:row-samples}
with $\alpha = 4$, with $\delta = 5/6$, and with $\eta = 7/6$.  A numerical reckoning shows that the additional probability of failure is at most $2k^{-1}$.  Altogether, we obtain the following result.  For
$$
\ell \geq 4 M \log(k),
$$
it holds that
$$
\sqrt{\frac{\ell}{6n}} \leq \sigma_k( \mtx{R}_T \mtx{W} )
\quad\text{and}\quad
\sigma_1( \mtx{R}_T \mtx{W} ) \leq \sqrt{\frac{13\ell}{6n}} 
$$
except with probability $3k^{-1}$.  Since $\mtx{\Phi V} = \sqrt{n/\ell} \cdot \mtx{R}_T \mtx{W}$, we have established Theorem~\ref{thm:SRHT}.

The same type of argument implies Theorem~\ref{thm:SRHT-large}.
In this case, we fix a sufficiently small positive number $\iota$.  Then we take $\beta = k$ and $\alpha = 1 + \iota/2$ and $\delta = 1 - \iota^2$ and $\eta = \econst - 1$.  We omit the details.

\begin{rem}
We can establish a slightly stronger sample bound in Theorem~\ref{thm:SRHT-large} by choosing the parameter $\iota = A / \log(k)$ for a sufficiently large number $A$.  With this selection, however, we do not obtain a constant bound for the lower singular value.
\end{rem}

\subsection{Proofs of Supporting Lemmas}

It remains to check that the underlying results are true.
We begin with the claim that $\mtx{HD}$ balances row norms.

\begin{proof}[Proof of Lemma~\ref{lem:HDV-rows}]
The orthonormality condition on $\mtx{V}$ is equivalent with the identity
$\mtx{V}^\adj \mtx{V} = \Id_k$.  Therefore, $\norm{ \mtx{V} } = 1$
and $\fnorm{\mtx{V}} = \sqrt{k}$.  To check that $\mtx{HDV}$ is
also an orthonormal matrix, simply compute that
$$
(\mtx{HDV})^\adj(\mtx{HDV}) = \mtx{V}^\adj \mtx{V} = \Id
$$
because $\mtx{D}, \mtx{H}$ are orthogonal matrices.

Fix a row index $j \in \{1,2,\dots,n\}$,
and define the function
$$
f(\vct{x})
    := \norm{ \onevct_j^\adj \mtx{H} \diag(\vct{x}) \mtx{V} }
    =: \norm{ \vct{x}^\adj \mtx{EV} }.
$$
We have written $\mtx{E} := \diag( \onevct_j^\adj \mtx{H} )$ for the diagonal
matrix constructed from the $j$th row of $\mtx{H}$; observe that each entry of
$\mtx{E}$ has magnitude $n^{-1/2}$.
The function $f$ is convex, and we quickly
determine its Lipschitz constant:
$$
\abs{ f(\vct{x}) - f(\vct{y}) }
    \leq \norm{ (\vct{x} - \vct{y})^\adj \mtx{EV}  }
    \leq \norm{ \vct{x} - \vct{y} } \norm{ \vct{E} } \norm{ \mtx{V} }
    = \frac{1}{\sqrt{n}} \norm{\vct{x} - \vct{y} }.
$$

We may use the function $f$ to study the variation of the rows norms of $\mtx{HDV}$.  Recall that $\mtx{D} := \diag(\vct{\eps})$ for a Rademacher vector
$\vct{\eps}$, and consider the random variable
$$
f(\vct{\eps}) = \norm{ \onevct_j^\adj \mtx{HDV} }.
$$
First, we bound the expectation.
$$
\Expect f(\vct{\eps})
    \leq \left[ \Expect f(\vct{\eps})^2 \right]^{1/2}
    = \fnorm{ \mtx{EV} }
    \leq \norm{\mtx{E}} \fnorm{\mtx{V}}
    = \sqrt{\frac{k}{n}}.
$$
Apply
the Rademacher tail bound, Proposition~\ref{prop:rad-tail}, with $t = \sqrt{8\log(\beta n)}$ to reach
$$
\Prob{ \norm{ \onevct_j^\adj(\mtx{HDV}) } \geq \sqrt{\frac{k}{n}}
+ \sqrt{\frac{8\log(\beta n)}{n}} } \leq \econst^{-8\log( \beta n) / 8} =
\frac{1}{\beta n}.
$$
This estimate holds for each row index $j = 1, 2, \dots, n$.
Finally, take a union bound over these $n$ events to reach the
advertised conclusion.
\end{proof}

Now we establish the result on row sampling.

\begin{proof}[Proof of Lemma~\ref{lem:row-samples}]
Let $\vct{w}_j^\adj$ denote the $j$th row of $\mtx{W}$, and define $M := n \cdot \max_j \normsq{ \vct{w}_j }$.

We can control the extreme singular values of the random matrix
$\mtx{R}_{T} \mtx{W}$ by bounding the extreme eigenvalues of
the $k \times k$ Gram matrix
$$
\mtx{Y} := (\mtx{R}_{T} \mtx{W})^\adj (\mtx{R}_{T} \mtx{W})
	= \sum\nolimits_{j \in T} \vct{w}_j \mtx{w}_j^\adj.
$$
Recall that $T$ is a random subset of $\{ 1, 2, \dots, n\}$ consisting of $\ell$ coordinates sampled without replacement.  Therefore, we may just as well view $\mtx{Y}$ as a sum of $\ell$ random matrices $\mtx{X}_1, \dots, \mtx{X}_\ell$ sampled without replacement from the family $\coll{X} := \{ \vct{w}_j \vct{w}_j^\adj : j = 1, 2, \dots, n \}$ of positive semidefinite matrices with dimension $k$.

The matrix Chernoff bound, Theorem~\ref{thm:matrix-chernoff}, allows us to obtain large deviation bounds for the extreme eigenvalues of $\mtx{Y}$.  We just need to determine the parameters involved in the statement.  First, note that
$$
\lambda_{\max}( \vct{w}_j \vct{w}_j^\adj ) = \normsq{\vct{w}_j} \leq \frac{M}{n}
\quad\text{for $j = 1,2, \dots, n$}.
$$
We easily compute the expectation of the first sample using the fact that the columns of $\mtx{W}$ are orthonormal:
$$
\Expect \mtx{X}_1 = \frac{1}{n} \sum\nolimits_{j=1}^n \vct{w}_j \vct{w}_j^\adj
	= \frac{1}{n} \mtx{W}^\adj\mtx{W}
	= \frac{1}{n} \Id.
$$
As a consequence, we obtain
$$
\mu_{\min} = \frac{\ell}{n}
\quad\text{and}\quad
\mu_{\max} = \frac{\ell}{n}.
$$
The lower Chernoff bound yields
$$
\Prob{ \lambda_{\min}(\mtx{Y})
	\leq (1 - \delta) \frac{\ell}{n} }
	\leq k \cdot \left[ \frac{\econst^{-\delta}}{(1-\delta)^{1-\delta}} \right]^{\ell/M}
\quad\text{for $\delta \in [0, 1)$.}
$$
The upper Chernoff bound yields
$$
\Prob{ \lambda_{\max}(\mtx{Y})
	\geq (1 + \delta) \frac{\ell}{n} }
	\leq k \cdot \left[ \frac{\econst^{\delta}}{(1+\delta)^{1+\delta}} \right]^{\ell/M}
\quad\text{for $\delta \geq 0$.}
$$
Substitute the identities
$$
\lambda_{\min}(\mtx{Y}) = \sigma_{k}(\mtx{R}_T\mtx{W})^2
\quad\text{and}\quad
\lambda_{\max}(\mtx{Y}) = \sigma_{1}(\mtx{R}_T\mtx{W})^2.
$$
Simplify the formulae to complete the proof.
\end{proof}

\subsection{Collecting Coupons} \label{sec:coupons}

The logarithmic factor in these results is \emph{necessary}, as we
show by example.  This discussion is extracted
from~\cite[Sec.~11]{HMT11:Finding-Structure}.

Fix an integer $k$, and set $n = k^2$.  Form an $n \times k$ orthonormal matrix $\mtx{W}$ by regular decimation of the $n \times n$ identity matrix.  More precisely, $\mtx{W}$ is the matrix whose $j$th row has a unit entry in column $1 + (j - 1)/k$ when $j \equiv 1 \pmod{k}$ and is zero otherwise.
To see why this type of matrix is inconvenient, it is helpful to consider an auxiliary matrix $\mtx{V} = \mtx{HD}\mtx{W}$.  Observe that, up to scaling and modulation of rows, $\mtx{V}$ consists of $k$ copies of a $k \times k$ Walsh--Hadamard transform stacked vertically.

Now, suppose that we apply an SRHT $\mtx{\Phi} = \mtx{RHD}$ to the matrix $\mtx{W}$.  We obtain a matrix of the form $\mtx{X} = \mtx{\Phi W} = \mtx{RV}$, which consists of $\ell$ random rows sampled from $\mtx{V}$.
Theorem~\ref{thm:SRHT} cannot hold unless $\sigma_k(\mtx{X}) > 0$.  To ensure the latter event takes place, we must select at least one copy each of the $k$ distinct rows of $\mtx{W}$.  This is the coupon collector's problem~\cite[Sec.~3.6]{MR95:Randomized-Algorithms}.  To obtain a complete set of $k$ rows with nonnegligible probability, we must sample at least $k \log(k)$ rows.  The fact that we are sampling without replacement does not improve the analysis appreciably because the matrix has too many rows.


\bibliographystyle{alpha}
\bibliography{srft-final}

\end{document}